\tikzset{directed/.style={decoration={markings,
mark=at position .7 with {\arrow[scale=1.4]{stealth}}},
postaction={decorate}}}
\def\@seccntDot{.}
\def\@seccntformat#1{\csname the#1\endcsname\@seccntDot\hskip 0.5em}
\renewcommand\section{\@startsection{section}{1}{\z@}%
{18\p@ \@plus 6\p@ \@minus 3\p@}%
{9\p@ \@plus 6\p@ \@minus 3\p@}%
{\large\bfseries\boldmath}}
\renewcommand\subsection{\@startsection{subsection}{2}{\z@}%
{12\p@ \@plus 6\p@ \@minus 3\p@}%
{3\p@ \@plus 6\p@ \@minus 3\p@}%
{\bfseries\boldmath}}
\renewcommand\subsubsection{\@startsection{subsubsection}{3}{\z@}%
{12\p@ \@plus 6\p@ \@minus 3\p@}%
{\p@}%
{\bfseries\boldmath}}
\theoremstyle{plain}
\newtheorem{theorem}{Theorem}
\newtheorem{theoremtwo}{Theorem}
\newtheorem*{theorem*}{Theorem}
\newtheorem{lemma}{Lemma}
\numberwithin{equation}{section}
\begin{document}

\title{The game of Cops and Robbers on directed graphs with forbidden subgraphs}

\author{Mingrui Liu
\thanks{Yau Mathematical Sciences Center, Tsinghua University, Beijing 100084, China
({\tt lmr16@mails.tsinghua.edu.cn}).}
}

\maketitle

\begin{abstract}
\par\vspace{2mm}
The traditional game of cops and robbers is played on undirected graph. Recently, the same game played on directed graph is getting attention by more and more people.
We  knew that if we forbid some subgraph we can bound the cop number of the corresponding class of graphs. In this paper, we analyze the game of cops and robbers on $\Vec{H}$-free digraphs. However, it is not the same as the case of undirected graph. So we give a new concept ($\Vec{H}^*$-free) to get a similar conclusion about the case of undirected graph.

\noindent{\bfseries Keywords:} cops and robber; directed graph; induced subgraphs

\noindent{\bfseries AMS classification:} 05C57, 05C20, 91A43

\end{abstract}

\section{Introduction}
The game of $\textit{Cops and robbers}$ first introduced by Nowakowski and Winkler \cite{MR685631} and independently by Quilliot \cite{name1} is a two player game played on a graph $G = (V, E)$. The first player, called the $\textit{cop player}$ has $k$ ($k\geq 1$) pieces and the second player, the $\textit{robber}$, has only one piece. In the beginning, the cop player places his $k$ pieces on some vertices (not necessarily distinct) of the graph and the robber places his piece on one vertex of the same graph. In the first round, the cop player can move all his pieces before the robber. Each piece of the cop can be moved to an adjacent vertex or stay idle. In the robber's move, he can also move his piece to an adjacent vertex or place it still. After the cop player and the robber finish their moves, one round is finished. It's the time for the next round, the two players do the same thing like the previous round alternately. The game ends when some piece of the cop and the robber are on the same vertex (that is, the cops catch the robber). In this case, we say the cop player wins or cops win. The robber wins if he can never be caught by the cops. Both players have complete information, which means they know the graph and the positions of all the pieces.

Nowakowski and Winkler \cite{MR685631} and Quilliot \cite{name1} considered the case for $k=1$. That means one cop catches one robber. Later, Aigner and Fromme \cite{MR739593} generalized the game to several cops. The key problem for this game is to know how many cops are needed to catch the robber. We call the minimum integer of cops required to capture the robber the $\textit{cop number}$ and denoted by $c(G)$.  Many mathematicians have already done some research for the cop number. Frankl \cite{MR890640} conjectured that for any connected $n$-vertex graph $G$ it holds that $c(G)=O(\sqrt{n})$ in 1987. This conjecture, known as Meyniel's conjecture still remains open. The best known upper bound, provided independently by \cite{MR2965383, MR2979296, MR2837608}, said that the cop number of any graph on $n$ vertices is upper bounded by $n2^{-(1+o(1))\sqrt{n}}$. That means we even cannot give a loose upper bound $O(n^{1-o(1)})$ so far. Aigner and Fromme \cite{MR739593} proved that for any planar graph $G$, $c(G)\leq 3$ in 1984. Quilliot \cite{MR782627} gave an upper bound $2g+3$ about the cop number for any graph $G$ of genus $g$ in 1985. Schroeder \cite{Schroeder} improved this bound to $\lfloor \frac{3g}{2}\rfloor +3$ and conjectured that $c(G) \leq g+3$ for any graph of genus $g$ in 2001.

In \cite{MR739593}, a simple forbidden subgraph condition was given to ensure that $c(G)\geq\delta(G)$. Joret, Kami\'{n}ski and Theis \cite{MR2791289} gave an important theorem about the relationship between the class of $H$-free graphs and bounded cop number. More recently, the game of cops and robber has been considered on directed graphs, or digraphs, for short. In \cite{MR919881}, upper bounds were provided for directed abelian Cayley graphs. In \cite{MR2979296}, Frieze, Krivelevich and Loh proved that $c(\Vec{D}) = O(n\frac{(loglogn)^2}{logn})$ for any strongly connected digraph $\Vec{D}$. For more research about the game of cops and robbers on directed graph, please refer to \cite{oriented}.

In this paper, we discuss the relationship between the class of $\Vec{H}$-free digraphs and the bounded cop number. We find that we cannot find a subgraph $\Vec{H}$ such that the class of $\Vec{H}$-free digraphs has bounded cop number, even if the directed graphs are strongly connected.

\begin{theorem}
For any  directed graph $\Vec{H}$, the class of $\Vec{H}$-free digraphs has unbounded cop number.
\end{theorem}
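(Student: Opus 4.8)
The plan is, for every integer $k$, to exhibit a strongly connected $\Vec{H}$-free digraph with cop number at least $k$; throughout, $\Vec{H}$-free means containing no induced subdigraph isomorphic to $\Vec{H}$. Two reductions come first. If $\Vec{H}$ is disconnected and $\Vec{H}_1$ is one of its weak components, then every $\Vec{H}_1$-free digraph is $\Vec{H}$-free, so it is enough to treat weakly connected $\Vec{H}$; and the only connected digraph with no arc is a single vertex, for which the statement is degenerate, so we may assume $\Vec{H}$ has an arc. I then split into three cases: (i) $\Vec{H}$ is not symmetric, i.e.\ some arc $uv$ has $vu\notin E(\Vec{H})$; (ii) $\Vec{H}=\overleftrightarrow{H}$ for a connected undirected graph $H$ that is not a path; (iii) $\Vec{H}=\overleftrightarrow{P_m}$ for some $m\ge 2$.

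Cases (i) and (ii) are handled by bidirected graphs. For a connected undirected graph $G$, replacing each edge by a pair of opposite arcs gives a digraph $\overleftrightarrow{G}$ that is strongly connected, on which the game is literally the game on $G$ (so $c(\overleftrightarrow{G})=c(G)$), and every one of whose induced subdigraphs has the form $\overleftrightarrow{G[S]}$ and is in particular symmetric. In case (i) a non-symmetric $\Vec{H}$ can never be an induced subdigraph of a symmetric digraph, so $\overleftrightarrow{G}$ is $\Vec{H}$-free for every $G$; taking $G$ to be a $k$-regular graph of girth at least $5$ — which exists for every $k$ and has cop number at least $k$ by the forbidden-subgraph condition of Aigner and Fromme \cite{MR739593} — finishes this case. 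In case (ii), $\overleftrightarrow{G}$ has an induced copy of $\overleftrightarrow{H}$ only if $G$ has an induced copy of $H$; since $H$ is connected and not a path, the theorem of Joret, Kami\'{n}ski and Theis \cite{MR2791289} provides $H$-free graphs of unbounded cop number, and for such a connected $G$ with $c(G)\ge k$ the digraph $\overleftrightarrow{G}$ is as required.

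Case (iii) is the substantive one. Since $\overleftrightarrow{P_m}$ contains the digon $\overleftrightarrow{K_2}$ as an induced subdigraph, every digon-free digraph — equivalently, every oriented graph — is $\overleftrightarrow{P_m}$-free, so it suffices to build strongly connected oriented graphs of unbounded cop number. Take a connected $d$-regular graph $G$ of girth at least $5$, so $c(G)\ge d$, and let $\Vec{D}$ be its in/out split: the vertex set is $\{v^-,v^+ : v\in V(G)\}$, with an arc $v^-\to v^+$ for every vertex $v$ and an arc $u^+\to v^-$ for every edge $uv$ of $G$. Then $\Vec{D}$ has no digon — one kind of arc goes from a vertex $v^-$ to $v^+$ and the other from a vertex $u^+$ to a vertex $v^-$ with $u\ne v$ — and it is strongly connected, because $v^-\rightsquigarrow v^+$ directly, $v^+\rightsquigarrow v^-$ through any neighbour of $v$, and reachability between underlying vertices mirrors that of $G$. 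I claim $c(\Vec{D})\ge c(G)\ge d$: the robber plays on $\Vec{D}$ in super-rounds of length two, keeping at each even round a position of the form $\rho^+$, and he translates each move $\rho\to\rho'$ of a never-idle winning robber strategy of $G$ against $d-1$ cops — the strategy by which $c(G)\ge\delta(G)$ is proved for girth at least $5$, which always steps to a neighbour that no cop occupies or is adjacent to — into the two moves $\rho^+\to(\rho')^-\to(\rho')^+$. Over one super-round each cop is forced to make two moves, and, in either phase, this pair of moves projects to a single legal move of $G$; running the $G$-strategy against these projected cop moves keeps the robber off every cop's vertex in the resulting $G$-play.

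The point that needs care — and the one I expect to be the main obstacle — is the faithfulness of this simulation: one must verify that the robber is uncaught both at the even rounds, which correspond bijectively to positions of the $G$-play, and at the intermediate odd rounds, since a cop reaching $(\rho')^-$ at an odd round would force a cop onto $\rho'$ at the next position of the projected $G$-play, contradicting that the $G$-strategy wins. Once this is checked — together with the routine observation that, the strategy being never-idle, the step $\rho^+\to(\rho')^-$ with $\rho'\ne\rho$ is always available — the three cases together give, for each $k$, a strongly connected $\Vec{H}$-free digraph of cop number at least $k$, so the class of $\Vec{H}$-free digraphs has unbounded cop number.
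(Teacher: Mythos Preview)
Your proof is correct but takes a substantially different route from the paper, and in fact proves more than is asked. The paper's argument for this theorem is very short: if the underlying multigraph of $\Vec{H}$ contains a cycle (in particular, if $\Vec{H}$ has any digon), then orientations of trees are $\Vec{H}$-free and have cop number at least their number of sources; otherwise $\Vec{H}$ is an oriented forest, hence contains an induced $\Vec{P_2}$, and bidirected incidence graphs of projective planes are $\Vec{P_2}$-free with unbounded cop number. Your case~(i) is exactly the paper's second case, but where the paper disposes of everything else in one line using non--strongly-connected digraphs (in-stars with many sources), you invoke the Joret--Kami\'nski--Theis theorem and build the in/out split. The payoff is that all of your examples are strongly connected, so you are really proving the paper's Theorem~2; the paper handles that separately via clique and arc substitution, and your in/out split is an independent, rather clean way to get strongly connected oriented digraphs of large cop number. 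One small wording slip in case~(iii): cops in $\Vec{D}$ are not ``forced to make two moves'' in a super-round, since they may idle; what you actually need, and what holds, is that any two consecutive $\Vec{D}$-steps of a cop project to at most one $G$-step, and that the specific Aigner--Fromme robber strategy keeps every projected cop at distance at least~$2$, which is what protects the robber at the intermediate odd rounds as well.
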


\begin{theorem}
For any  directed graph $\Vec{H}$, the class of $\Vec{H}$-free
strongly connected digraphs has unbounded cop number.
\end{theorem}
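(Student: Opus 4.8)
The plan is to dichotomize on whether the digraph $\Vec{H}$ is \emph{symmetric}, i.e.\ whether $uv\in E(\Vec{H})$ if and only if $vu\in E(\Vec{H})$; in each case I will exhibit, for every $k$, a strongly connected $\Vec{H}$-free digraph of cop number at least $k$. (One may assume $\Vec{H}$ has at least two vertices, the remaining cases being degenerate.) Throughout, for an undirected graph $G$ let $\overleftrightarrow{G}$ denote the digraph on $V(G)$ having both arcs $uv$ and $vu$ for every edge $uv$ of $G$.

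Suppose first that $\Vec{H}$ is not symmetric, and fix $uv\in E(\Vec{H})$ with $vu\notin E(\Vec{H})$. I would then take $\overleftrightarrow{G}$ for a connected graph $G$ with $c(G)\ge k$; such graphs exist for every $k$ — this is classical, e.g.\ via the bound $c(G)\ge\delta(G)$ of Aigner and Fromme \cite{MR739593}. Three observations close this case: $\overleftrightarrow{G}$ is strongly connected since $G$ is connected; a move along an arc of $\overleftrightarrow{G}$ is exactly a move along an edge of $G$, so the game on $\overleftrightarrow{G}$ coincides with the game on $G$ and hence $c(\overleftrightarrow{G})=c(G)\ge k$; and for any $S\subseteq V(G)$ the induced subdigraph $\overleftrightarrow{G}[S]$ equals $\overleftrightarrow{G[S]}$, which is symmetric and therefore not isomorphic to $\Vec{H}$. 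So $\overleftrightarrow{G}$ is a strongly connected $\Vec{H}$-free digraph of cop number at least $k$.

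Now suppose $\Vec{H}$ is symmetric. If $\Vec{H}$ has an arc $uv$ then by symmetry $vu$ is also an arc, so $\Vec{H}$ contains a digon; if $\Vec{H}$ has no arc then, having at least two vertices, it contains two non-adjacent vertices. In either case a tournament contains no induced copy of $\Vec{H}$: a tournament has exactly one arc between each pair of vertices, so it contains neither a digon nor two non-adjacent vertices, and every induced subdigraph of a tournament is again a tournament. Hence here I would take a strongly connected tournament $T$ with $c(T)\ge k$, which is again a strongly connected $\Vec{H}$-free digraph of cop number at least $k$; letting $k\to\infty$ proves the theorem. (Incidentally, both families constructed above are strongly connected, so this argument reproves Theorem 1 as well.)

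The one substantive ingredient is the existence, for each $k$, of a strongly connected tournament with cop number at least $k$, and this is the step I expect to demand the most care. It cannot be circumvented by staying in the world of bidirected digraphs: if $\Vec{H}=\overleftrightarrow{P_t}$ is a bidirected path, then an $\Vec{H}$-free digraph of the form $\overleftrightarrow{G}$ forces $G$ to be $P_t$-free, whereas the class of $P_t$-free undirected graphs has \emph{bounded} cop number by the theorem of Joret, Kami\'{n}ski and Theis \cite{MR2791289} — so one genuinely must leave that world, which is exactly the phenomenon signalled in the introduction. I would supply the tournaments either by quoting the known estimates for the cop number of random tournaments (see \cite{oriented}) or by a direct argument: against $k$ cops the robber tries to occupy a vertex dominating all current cop positions, and in a suitably chosen (random or quasirandom) tournament on $n$ vertices a neighbourhood-counting estimate shows that this can be maintained as long as $n$ is large relative to $2^{k}$, which gives cop number $\Omega(\log n)$.
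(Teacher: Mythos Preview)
Your argument is correct and takes a genuinely different route from the paper. The paper builds two graph operations---clique substitution and arc subdivision---proves that neither decreases the cop number (Lemmas~1 and~2), and uses them to force the underlying graph of any cop-bounding $\Vec{H}$ to be first a forest (Lemma~4) and then a path (Lemma~3), at which point the bidirected projective-plane incidence graphs supply the contradiction. Your symmetry dichotomy bypasses all of that machinery: asymmetric $\Vec{H}$ are handled by symmetric digraphs $\overleftrightarrow{G}$, and symmetric $\Vec{H}$ by tournaments. This is shorter and conceptually cleaner, at the cost of importing the non-trivial fact that strongly connected tournaments have unbounded cop number, whereas the paper's substitution lemmas are self-contained and of independent interest. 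Two minor remarks on your write-up: the Aigner--Fromme inequality $c(G)\ge\delta(G)$ requires girth at least~$5$, so the citation is slightly off (though the existence of connected graphs of arbitrarily large cop number is of course not in doubt); and in the tournament sketch the robber must move along an out-arc, so the property you actually need is that for every vertex $r$ and every $k$-set $S$ there is a vertex in $N^{+}(r)\cup\{r\}$ dominating $S$---this is marginally stronger than ``some vertex dominates $S$'', but the same neighbourhood-counting argument in a random (hence a.a.s.\ strongly connected) tournament gives it, and your $\Omega(\log n)$ conclusion stands.
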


That means we cannot find any given structures to guarantee that $\Vec{H}$-free digraphs have bounded cop number. Maybe the condition of $\Vec{H}$-free is too strict and we find another way to forbid the subgraphs and we call it $\Vec{H}^*$-free. We get one conclusion about the $\Vec{H}^*$-free and the bounded cop number.

\begin{theorem}
Let $\Vec{D}$ be a $\Vec{P_k}^*$-free strongly connected digraph for $k\geq 3$, then $c(\Vec{D})\leq k-2$.
\end{theorem}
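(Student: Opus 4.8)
The plan is to adapt to digraphs the geodesic-guarding strategy of Aigner and Fromme, in the refined form used by Joret, Kami\'nski and Theis for undirected $P_k$-free graphs, while keeping track of the correct \emph{directed} monovariant. Recall (from the definition of $\Vec{P_k}^*$-freeness) that $\Vec{D}$ contains no directed path $v_1\to v_2\to\cdots\to v_k$ on $k$ distinct vertices with no forward chord, i.e.\ with no arc $v_i\to v_j$ for $j\ge i+2$. Since a directed geodesic of $\Vec{D}$ has no forward chord (an arc $v_i\to v_j$ would shorten it), every directed geodesic uses at most $k-1$ vertices, so the strongly connected digraph $\Vec{D}$ has directed diameter at most $k-2$. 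The potential I would use along the game is
\[
\lambda(T)=\text{the largest number of vertices on a forward-chord-free directed path of }\Vec{D}\text{ contained in }T,
\]
where $T\subseteq V(\Vec{D})$ is the robber's current territory (the vertices the robber could still reach given the cops already committed); being $\Vec{P_k}^*$-free says exactly $\lambda(V(\Vec{D}))\le k-1$.

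I would prove the bound by induction on $\lambda$, in the form ``if the robber is confined to $T$ and $\lambda(T)=\ell$, then $\ell-1$ further cops catch the robber.'' For the base case $\ell=2$: then for all distinct $u,v,w\in T$ with $u\to v$ and $v\to w$ we have $u\to w$, and for $T=V(\Vec{D})$ this together with strong connectivity forces $\Vec{D}$ to be the complete digraph, on which one cop wins by stepping each round onto a common out-neighbour of its own vertex and the robber's, keeping their directed distance at most $1$ (this settles $k=3$). For the inductive step $\ell\ge 3$: I would use one additional cop to take over a directed geodesic $Q$ of $\Vec{D}$ joining two vertices of $T$ at maximum directed distance, so that $|V(Q)|\le\ell$; once the cop controls $Q$, the robber is confined to a single connected component $T'$ of the underlying graph of $\Vec{D}[T\setminus V(Q)]$, and maximality of $Q$ should force any forward-chord-free directed path of $\Vec{D}$ lying inside $T'$ to be extendable along $Q$, hence to have fewer than $\ell$ vertices, so $\lambda(T')\le\ell-1$. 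Induction then finishes with $\ell-2$ more cops, for $\ell-1$ in total. Starting from $\lambda\le k-1$ and descending to $\lambda=2$ costs $k-3$ guarding steps plus one finishing cop, i.e.\ $k-2$ cops.

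The real difficulty, and the point where the directed case genuinely departs from the undirected one, is the guarding step: a single cop cannot in general guard a directed geodesic—already $\Vec{C_n}$ with $n\ge 3$ needs two cops to trap the robber—because a cop that has advanced past a threatened stretch of $Q$ cannot walk back along $Q$. This is where I would spend the effort, exploiting the directed-diameter bound: a cop that has overshot can recirculate through $\Vec{D}$ and be back on the threatened stretch within a bounded number of rounds, and I would argue that this is short enough, relative to the number of rounds the robber needs to reach that stretch (the robber being itself slowed by the other cops), to re-establish the block. If one recirculation is not enough, the honest fallback is to guard $Q$ with two cops and show the potential then drops by two per step, which still sums to $k-2$; making the count land on exactly $k-2$ rather than something larger is then the delicate bookkeeping. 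A secondary wrinkle is that $\Vec{D}[T']$ need not be strongly connected, which is why the statement must be inducted on for the robber's territory directly rather than for an induced subdigraph—but this helps rather than hurts, since in a non-strongly-connected territory the robber is forced ``downstream'' and is easier to corner.
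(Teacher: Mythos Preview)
Your approach is genuinely different from the paper's, and the gap is precisely where you locate it. You concede that a single cop cannot guard a directed geodesic, and neither repair you sketch closes the hole. Recirculation needs the cop to beat the robber back to the threatened stretch of $Q$; the diameter bound $k-2$ caps the cop's detour but gives no lower bound on the robber's approach time---the robber may be one arc from $Q$---so the comparison you need is simply absent. The two-cop fallback does not help the count: the territory left after guarding $Q$ is $T\setminus V(Q)$ regardless of whether one or two cops sit on $Q$, so $\lambda(T')$ drops by the same amount either way, and spending two cops per step would double the cost to roughly $2(k-2)$. (Even the one-cop extendability claim, that any long forward-chord-free path in $T'$ must hook onto $Q$, is not argued; once $T'$ is an arbitrary weak component of $T\setminus V(Q)$ inside a digraph, the usual undirected argument does not transfer.) So the induction does not close at $k-2$.

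The paper never guards anything. Assume for contradiction that $k-2$ cops lose, and let $t\ge1$ be the smallest cop--robber distance the cops can ever force. Gather all $k-2$ cops at one vertex $u$ realizing distance $t$ to the robber's vertex $v$, along a geodesic $u=u_0\to\cdots\to u_{t-1}\to v$. Now in each round leave one cop stationary and advance the rest one step, first along this geodesic and then along the robber's own trail $v\to v_1\to v_2\to\cdots$. A forward arc from any already-occupied vertex to the robber's current vertex would let a cop shortcut the distance below $t$, contradicting minimality of $t$; hence there are no forward chords, the robber's vertices $v_1,\dots,v_{k-2}$ are new and distinct, and $u_0\to\cdots\to v\to v_1\to\cdots\to v_{k-2}$ is a forward-chord-free directed path on $t+k-1\ge k$ vertices, contradicting $\Vec{P_k}^*$-freeness. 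The minimality of $t$ together with the trail of stationary sentinels does all the work that guarding would have done.
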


\section{Preliminaries}
When we mention a graph $G$, we mean that $G$ is simple, finite, connected and undirected. When we mention a digraph $\Vec{D}$, we mean that $\Vec{D}$ has no loops or parallel arcs (arcs with the same head and the same tail), and is weakly connected (its underlying undirected graph is connected). The oriented graph  is a directed graph  without $2$-cycles, which means its underlying undirected graph has no multiple edges.

Let $\Vec{H}$ be a digraph. A digraph is called $\Vec{H}$-free if it does not contain $\Vec{H}$ as an induced subgraph. The adjacency matrix of $\Vec{H}$ of order $n$ is the $n\times n$ matrix $ (a_{uv})$, where $a_{uv}=1$ or $0$ according to whether there exists an arc in $\Vec{H}$ with tail $u$ and head $v$.
Let $\Vec{P_k}$ be an oriented graph of a path of order $k$.
A digraph $\Vec{D}$ is called $\Vec{P_k}^*$-free if  $\Vec{D}$ does not contain any induced subgraph whose upper triangular part of its adjacency matrix is the same as $\Vec{P_k}$.

If a digraph $\Vec{D}$ does not contain $\Vec{P_k}$ as a subgraph (not necessary an induced subgraph), then $\Vec{D}$ is $\Vec{P_k}^*$-free. If $\Vec{D}$ is $\Vec{P_k}^*$-free, then $\Vec{D}$ must be $\Vec{P_k}$-free.

We say that the cop number is $\textit{bounded}$ for a class of digraphs, if there exits a constant $C$ such that the cop number of every digraph in this class is at most $C$; otherwise the cop number is $\textit{unbounded}$ for this class.

Let $\Vec{D}$ be a digraph. The neighbourhood of a vertex $v$ in $\Vec{D}$, denoted by $N(v)$, is the set of vertices adjacent to $v$ in its underlying undirected graph. Let  $N^-(v)$ (resp. $N^+(v)$) be the set of vertices who only has arc with head $v$ (resp. with head $v$) and $N^\pm(v)$ be the set of vertices who has both arcs with head $v$ and with tail $v$ in $\Vec{D}$. Then $N^-(v)$, $ N^+(v)$, $ N^\pm(v)$ are pairwise disjoint and $N(v)=N^-(v)\cup N^+(v)\cup N^\pm(v)$. Let $N^-(v)=\{x^-_1,\ldots,x^-_{i_1}\}$, $N^+(v)=\{x^+_1,\ldots,x^+_{i_2}\}$ and $N^\pm(v)=\{x^\pm_1,\ldots,x^\pm_{i_3}\}$.
A $\textit{clique substitution}$ at a vertex $v$ consists in replacing $v$ with new vertices $\{y^-_1,\ldots,y^-_{i_1}\}\cup \{y^+_1,\ldots,y^+_{i_2}\}\cup  \{y^\pm_1,\ldots,y^\pm_{i_3}\}$ satisfies (i) there is one arc with tail $x^-_i$ and head $y^-_i$, $1\le i\le i_1$, there is one arc with tail $y^+_i$ and head $x^+_i$, $1\le i\le i_2$ and there are two opposite arcs between $x^\pm_i$ and $y^\pm_i$, $1\le i\le i_3$; (ii) $\{y^-_1,\ldots,y^-_{i_1}\}$, $\{y^+_1,\ldots,y^+_{i_2}\}$ and $  \{y^\pm_1,\ldots,y^\pm_{i_3}\}$ are three cliques with two opposite arcs, respectively; (iii) there are two opposite arcs between $y^\pm_i$ for $1\le i\le i_3$ and all $y\in \{y^-_1,\ldots,y^-_{i_1}\}\cup \{y^+_1,\ldots,y^+_{i_2}\}$ and there is one arc with tail $y'$ and head $y''$ for all $y'\in \{y^-_1,\ldots,y^-_{i_1}\}$ and $y''\in \{y^+_1,\ldots,y^+_{i_2}\}$ (see Figure 1 for example).  The digraph obtained from a digraph $\Vec{D}$ by substituting a clique at each vertex of $\Vec{D}$ will be denoted by $\Vec{D}^+$.
An $\textit{arc substitution}$ at an arc $a$ is to replace $a$ by a same directed path of any length (see Figure 2 for example).


We can find that a strongly connected directed graph is still strongly connected after the operation of clique substitution or arc substitution.

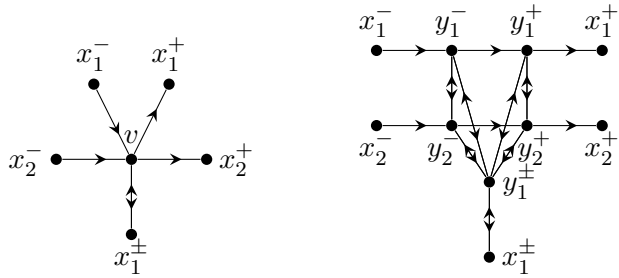
\begin{figure}[hbt]
\begin{center}
\tikzstyle{vertex}=[circle,fill=black,inner sep=1.5pt]
\tikzstyle{new}=[circle,fill=red,inner sep=1.5pt]
\tikzstyle{blue}=[circle,fill=blue, inner sep=1.5pt]
\tikzstyle{green}=[circle,fill=green, inner sep=1.5pt]

\tikzstyle{curly edge}=[]
\tikzstyle{straight edge}=[]

\begin{tikzpicture}
	\begin{pgfonlayer}{nodelayer}
		\node [style=vertex] (0) at (-0.5, 1) {};
		\node [style=vertex] (1) at (-1, 0) {};
		\node [style=vertex] (2) at (0, 0) {};
		\node [style=vertex] (3) at (0, -1) {};
		\node [style=vertex] (4) at (1, 0) {};
		\node [style=vertex] (5) at (0.5, 1) {};
		\node at (0, 0.3) {$v$};
		\node at (-0.5, 1.4) {$x^-_1$};
		\node at (-1.4, 0) {$x^-_2$};
		\node at (0, -1.3) {$x^\pm_1$};
		\node at (1.4, 0) {$x^+_2$};
		\node at (0.5, 1.4) {$x^+_1$};
		
	\end{pgfonlayer}
	\begin{pgfonlayer}{edgelayer}
		\draw [directed] (0) to (2);
		\draw [directed] (1) to (2);
		\draw [directed] (2) to (4);
		\draw [directed] (2) to (5);
		\draw [directed] (2) to (3);
		\draw [directed] (3) to (2);
	\end{pgfonlayer}
\end{tikzpicture}
\hspace{1cm}
\begin{tikzpicture}
	\begin{pgfonlayer}{nodelayer}
		\node [style=vertex] (0) at (-1.5, 1) {};
		\node [style=vertex] (1) at (-0.5, 1) {};
		\node [style=vertex] (2) at (0.5, 1) {};
		\node [style=vertex] (3) at (1.5, 1) {};
		\node [style=vertex] (4) at (-1.5, 0) {};
		\node [style=vertex] (5) at (-0.5, 0) {};
		\node [style=vertex] (6) at (0.5, 0) {};
		\node [style=vertex] (7) at (1.5, 0) {};
		\node [style=vertex] (8) at (0, -0.75) {};
		\node [style=vertex] (9) at (0, -1.75) {};
		\node at (-1.5, 1.4) {$x^-_1$};
		\node at (-.5, 1.4) {$y^-_1$};
		\node at (.5, 1.4) {$y^+_1$};
		\node at (1.5, 1.4) {$x^+_1$};
		\node at (-1.5, -.3) {$x^-_2$};
		\node at (-.6, -.3) {$y^-_2$};
		\node at (.6, -.3) {$y^+_2$};
		\node at (1.5, -.3) {$x^+_2$};
		\node at (0.4, -.75) {$y^\pm_1$};
		\node at (0.4, -1.75) {$x^\pm_1$};
	
	\end{pgfonlayer}
	\begin{pgfonlayer}{edgelayer}
		\draw [directed] (0) to (1);
		\draw [directed] (1) to (2);
		\draw [directed] (2) to (3);
		\draw [directed] (4) to (5);
		\draw [directed] (5) to (6);
		\draw [directed] (6) to (7);
		\draw [directed] (8) to (5);
		\draw [directed] (8) to (9);
		\draw [directed] (5) to (1);
		\draw [directed] (2) to (6);
		\draw [directed] (6) to (2);
		\draw [directed] (6) to (8);
		\draw [directed] (8) to (6);
		\draw [directed] (1) to (8);
		\draw [directed] (8) to (1);
		\draw [directed] (2) to (8);
		\draw [directed] (8) to (2);
		\draw [directed] (5) to (8);
		\draw [directed] (9) to (8);
		\draw [directed] (1) to (5);
		\end{pgfonlayer}
\end{tikzpicture}
\label{fig:graphs}
\caption{Example of clique substitution at vertex $v$.}
\end{center}
\end{figure}

\begin{figure}[hbt]
\begin{center}
\tikzstyle{vertex}=[circle,fill=black,inner sep=1.5pt]
\tikzstyle{new}=[circle,fill=red,inner sep=1.5pt]
\tikzstyle{blue}=[circle,fill=blue, inner sep=1.5pt]
\tikzstyle{green}=[circle,fill=green, inner sep=1.5pt]

\tikzstyle{curly edge}=[]
\tikzstyle{straight edge}=[]

\begin{tikzpicture}
	\begin{pgfonlayer}{nodelayer}
		\node [style=vertex] (0) at (-1, 0) {};
		\node [style=vertex] (1) at (0, 0) {};
		\node at (-0.5,0.3) {$a$};
	\end{pgfonlayer}
	\begin{pgfonlayer}{edgelayer}
		\draw [directed] (0) to (1);
	\end{pgfonlayer}
\end{tikzpicture}
\hspace{1cm}
\begin{tikzpicture}
	\begin{pgfonlayer}{nodelayer}
		\node [style=vertex] (0) at (-1, 0) {};
		\node [style=vertex] (1) at (-0.5, 0) {};
		\node [style=vertex] (2) at (0, 0) {};
		\node [style=vertex] (3) at (0.5, 0) {};
	\end{pgfonlayer}
	\begin{pgfonlayer}{edgelayer}
		\draw [directed] (0) to (1);
		\draw [directed] (1) to (2);
		\draw [directed] (2) to (3);
		\end{pgfonlayer}
\end{tikzpicture}
\label{fig:graphs}
\caption{Example of arc substitution at arc $a$.}
\end{center}
\end{figure}
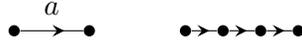

\section{Forbidden Induced Subgraph}
\begin{theoremtwo}
For any directed graph $\Vec{H}$, the class of $\Vec{H}$-free digraphs has unbounded cop number.
\end{theoremtwo}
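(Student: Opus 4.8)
\emph{Proof idea.}
The plan is to split into two cases according to whether $\Vec{H}$ is a \emph{symmetric} digraph, i.e.\ whether $uv\in A(\Vec{H})$ is equivalent to $vu\in A(\Vec{H})$ for all $u,v$, and in each case to convert a known family of undirected graphs of unbounded cop number into an $\Vec{H}$-free family of digraphs. We will only use the classical fact that undirected graphs of arbitrarily large cop number exist: by the forbidden subgraph criterion of Aigner and Fromme \cite{MR739593}, a graph of girth at least $5$ satisfies $c(G)\geq\delta(G)$, so there are connected graphs $G_1,G_2,\dots$ with $c(G_n)\to\infty$. We may assume $\Vec{H}$ has at least two vertices, since for $|V(\Vec{H})|=1$ the statement is vacuous; as $\Vec{H}$ is weakly connected, it then has at least one arc.

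\emph{Case 1: $\Vec{H}$ is not symmetric.} Fix $(u,v)$ with $uv\in A(\Vec{H})$ and $vu\notin A(\Vec{H})$. Let $\Vec{G_n}$ be the digraph on $V(G_n)$ obtained by replacing each edge $\{x,y\}$ with the two opposite arcs $xy$ and $yx$. Every induced sub-digraph of $\Vec{G_n}$ is again symmetric, so $\Vec{G_n}$ has no induced copy of the non-symmetric digraph $\Vec{H}$; and the game on $\Vec{G_n}$ is played exactly as the game on $G_n$ (any arc may be traversed either way), so $c(\Vec{G_n})=c(G_n)\to\infty$. Hence the class of $\Vec{H}$-free digraphs has unbounded cop number. (The $\Vec{G_n}$ are moreover strongly connected, so this case also feeds into Theorem~2.)

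\emph{Case 2: $\Vec{H}$ is symmetric.} Then $\Vec{H}$ contains a directed $2$-cycle, so \emph{every} oriented graph --- a digraph with no $2$-cycle --- is automatically $\Vec{H}$-free, and it suffices to produce oriented graphs of unbounded cop number. We apply an arc substitution to $\Vec{G_n}$: replace the pair of opposite arcs between each adjacent $x,y$ by two internally disjoint directed paths of a fixed length $\ell$, one from $x$ to $y$ and one from $y$ to $x$; call the result $\Vec{O_n}$. Since every arc incident with a subdivision vertex points forward along its path, $\Vec{O_n}$ has no $2$-cycle, and it remains (strongly) connected. The point is that $c(\Vec{O_n})\geq c(G_n)$: the robber runs a winning strategy for the game on $G_n$ against $c(G_n)-1$ cops, slowed by the factor $\ell$ that it costs to walk one subdivided path, and reads a cop lying inside the directed path from $x$ to $y$ as a (conservative) threat at $y$, since $y$ is the only original vertex it can reach next. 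Then $k$ real cops yield at most $k$ virtual cops, no cop can advance more than one $G_n$-step per $\ell$ real rounds, and no cop can occupy an original vertex before the robber has time to leave it, so the simulated strategy is never caught. Hence $c(\Vec{O_n})\to\infty$.

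\emph{Where the work lies.} The case split itself is immediate from the definitions: a weakly connected $\Vec{H}$ on at least two vertices has an arc; doubling edges into opposite arc pairs yields only symmetric induced sub-digraphs; and a symmetric $\Vec{H}$ contains a $2$-cycle, hence cannot occur as an induced subgraph of an oriented graph. The one place that needs a careful write-up is the inequality $c(\Vec{O_n})\geq c(G_n)$ of Case~2: one has to check that the alternation of moves, together with the option of a cop idling \emph{inside} a subdivided path, never helps the cops relative to the game on $G_n$; taking $\ell$ large is the clean way to make this synchronization argument go through. Alternatively, one could invoke a known construction of oriented graphs, or tournaments, of unbounded cop number in place of the explicit $\Vec{O_n}$.
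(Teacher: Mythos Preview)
Your Case~1 coincides with one half of the paper's argument (symmetric digraphs built from undirected graphs of large cop number), but the overall case split is different: the paper divides according to whether the \emph{underlying undirected graph} of $\Vec{H}$ contains a cycle. When it does, the paper simply notes that every digraph whose underlying graph is a tree is $\Vec{H}$-free, and that the cop number of any digraph is at least its number of sources (vertices of in-degree~$0$); an in-oriented star on $n$ leaves then has cop number $\ge n$. That one-line observation would also dispose of your Case~2 --- a symmetric $\Vec{H}$ contains a $2$-cycle and is therefore avoided by every oriented tree --- so the subdivision construction is not needed for Theorem~1. Your route does buy something, however: your $\Vec{O_n}$ are strongly connected, while directed trees with several sources never are, so your construction feeds directly into the strongly connected Theorem~2. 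The inequality $c(\Vec{O_n})\ge c(\Vec{G_n})$ you sketch is exactly the paper's Lemma~2 (``arc substitution does not decrease the cop number''), and the paper proves it by a \emph{cop-side} simulation (copy a winning cop strategy on the subdivided graph back to the original via the natural projection) rather than your robber-side one; that direction sidesteps the synchronization issue you flag, since the projected cops in $G_n$ simply idle whenever their shadows in $\Vec{O_n}$ have not yet crossed into a new original vertex.
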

\begin{proof}
Suppose not, let $\Vec{H}$ be a directed graph such that the class of $\Vec{H}$-free graphs has bounded cop number. First, suppose the underlying graph of $\Vec{H}$ contains a cycle. We know that the class of directed graphs whose underlying graphs are trees is contained in the class of $\Vec{H}$-free graphs. However the class of directed graphs whose underlying graphs are trees is cop-unbounded since the cop number for any digraph $\Vec{D}$ is at least as large as the number of its sources, a contradiction. Now we consider the case that the underlying graph of $\Vec{H}$ does not contain a cycle. Then the underlying graph of $\Vec{H}$ must be a forest.
Since $\Vec{H}$ is a forest, $\Vec{H}$ must contain an induced arc  $\Vec{P_2}$. However, we can find a class of $\Vec{P_2}$-free graphs which is cop-unbounded. In fact we just replace each edge in the incidence graphs of finite projective plane (the cop number is $\sqrt{n}$) \cite{MR2985811} as two oppositely oriented arcs with the same ends, a contradiction. So the class of $\Vec{H}$-free digraphs has unbounded cop number.
\end{proof}

We known that the cop number of any directed graph can reduce to the cop number of its strongly connected components. So we restrict the directed graph to the strongly connected directed graph. However, we still cannot get the desired conclusion. First, we need four lemmas.

\begin{lemma}
For any directed graph, clique substitution dose not decrease the cop number.
\end{lemma}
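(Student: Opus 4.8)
The plan is to prove the contrapositive: if $k$ cops fail to catch the robber on $\Vec{D}$, then $k$ cops fail on $\Vec{D}^+$ as well. Equivalently, I would take a winning play for the robber against $k$ cops on $\Vec{D}$ and \emph{lift} it to one on $\Vec{D}^+$; applying this with $k=c(\Vec{D})-1$ then gives $c(\Vec{D}^+)\ge c(\Vec{D})$. Since $\Vec{D}^+$ is obtained by performing a clique substitution at one vertex at a time, I would first reduce to a single clique substitution, at a vertex $v$, and then iterate. Write $S_v$ for the set of new vertices replacing $v$ (the \emph{super-node}), and let $\pi\colon V(\Vec{D}^+)\to V(\Vec{D})$ be the map that collapses $S_v$ to $v$ and fixes every other vertex. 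The first facts I would record are that $\pi$ is surjective and that every arc of $\Vec{D}^+$ is sent by $\pi$ either to a loop (both ends in $S_v$) or to an arc of $\Vec{D}$; hence any legal move in $\Vec{D}^+$ projects to a legal move-or-pass in $\Vec{D}$, so the $\pi$-images of the cops' positions always form a legal sequence of $k$-cop positions in $\Vec{D}$.

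Next I would extract from the definition of the clique substitution the internal facts about $S_v$ that the lift uses. The ``out-ports'' of $S_v$ (the vertices $y^+_i$ and $y^\pm_i$, which carry all arcs leaving $S_v$) are such that every out-port is reachable in at most one step from any vertex of $S_v$; consequently, for any arc $v\to w$ of $\Vec{D}$, a robber sitting anywhere in $S_v$ can enter $S_w$ in at most two moves. I would also note that $S_v$ contains a cop if and only if the projected position has a cop at $v$; hence whenever the shadow position of the robber is cop-free the robber is safe no matter which vertex of $S_v$ it occupies, and in particular it is never caught while re-routing inside $S_v$.

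Then I would run the simulation. The robber on $\Vec{D}^+$ maintains a shadow game on $\Vec{D}$ in which the shadow cops occupy $\pi$ of the real cops and the shadow robber occupies $\pi$ of the real robber, and plays its assumed winning strategy there: when the shadow strategy prescribes ``stay'', the real robber stays (or re-routes harmlessly within its super-node); when it prescribes ``move $v\to w$'', the real robber uses the at most two moves above to slide to the out-port of $S_v$ pointing at $S_w$ and then cross. By the cop-freeness observation the robber is never captured, and by the projection property the shadow cops' moves stay legal, so the shadow strategy remains applicable and the robber survives forever.

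The step I expect to be the main obstacle is reconciling the two clocks: one shadow move can cost the real robber two real moves, during which the real cops also move twice, and a naive identification would hand the shadow cops an illegal ``double step'' in $\Vec{D}$. I would fix this by phasing the simulation --- advancing the shadow game by one round only at the moments when the real robber actually changes super-node, and reading the shadow cops' positions off the intervening real round, so that consecutive shadow-cop positions differ by a single legal step of $\Vec{D}$ --- and then checking that the shadow robber's reply supplied by its winning strategy is still safe against these slightly stale shadow-cop positions. That last check is exactly where cop-freeness of $S_v$ one real step earlier is used, and it is also what forces one to treat separately the degenerate case in which $v$ has no two-way neighbour, so that $S_v$ is not internally strongly connected. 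Carrying this verification through is the technical heart of the argument.
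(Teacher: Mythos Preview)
Your plan and the paper's proof are dual simulation arguments, but they diverge at two points, and the first one is a real problem.

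\textbf{The reduction to one vertex is where the plan breaks.} You propose to substitute at a single vertex $v$ and iterate. But the device that resolves the clock mismatch---that two consecutive moves in the substituted graph can change super-node at most once---is a \emph{global} feature of $\Vec{D}^+$: after a cop crosses from $\phi(x)$ into $\phi(y)$ it lands on an in-port ($y^-$ or $y^\pm$), and from there the only crossing out-arc, if any, goes straight back to $\phi(x)$; hence $x\to y\to z$ forces $z\in\{x,y\}$. This fails completely for a single-vertex substitution: a cop sitting at some $a\notin S_v$ can walk $a\to b\to c$ with $a,b,c$ three distinct vertices of $\Vec{D}$, all outside $S_v$, while your robber is spending its two real rounds re-routing inside $S_v$. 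The projection of that cop's walk is two full steps in $\Vec{D}$, so consecutive shadow-cop positions do \emph{not} differ by a single legal step, no matter which ``intervening'' round you read them from. Your phasing idea cannot repair this, because the difficulty is not about when you sample the cops but about how far they can travel between samples.

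\textbf{How the paper handles it instead.} The paper never reduces to one vertex; it works directly with the simultaneous substitution $\Vec{D}^+$, where the in-port/out-port structure is present at \emph{every} super-node and the ``two real moves project to at most one shadow move'' fact therefore holds for every cop. It also runs the simulation in the opposite direction: assume a cop winning strategy on $\Vec{D}^+$, lift the \emph{robber's} moves from $\Vec{D}$ to $\Vec{D}^+$ (one or two steps), let the cops on $\Vec{D}^+$ reply by their winning strategy, and then project those one or two cop replies down to a single legal step in $\Vec{D}$ using the structural fact. Because the simulated game on $\Vec{D}^+$ is fully under the prover's control, there is no ``stale position'' check to perform and no information-lag problem of the kind you flag in your last paragraph. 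If you want to keep the contrapositive direction, you should at minimum drop the single-vertex reduction and argue directly on $\Vec{D}^+$; even then you will need the in-port/out-port observation explicitly to justify that two real cop moves project to at most one shadow move, which is the step your proposal currently leaves unproved.
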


\begin{proof}
Let $\Vec{D}$ be a directed graph and clique substitution at every vertex in $\Vec{D}$ is denoted by $\Vec{D}^+$. To each vertex $v$ $\in$ $V(\Vec{D})$ there corresponds a clique in $\Vec{D}^+$, which we denote by $\phi(v)$. We simultaneously play two games: one on $\Vec{D}$ and another on $\Vec{D}^+$. We assume that we have a winning strategy for the cop player on $\Vec{D}^+$ and we simulate her moves on $\Vec{D}$.  On
the other hand, the robber is playing on $\Vec{D}$ and we simulate his moves on $\Vec{D}^+$.

At the beginning, according to the strategy that the cops are placed on $\Vec{D}^+$, the corresponding cops in $\Vec{D}$ are placed in the obvious way: if a cop in $\Vec{D}^+$ is on a vertex of  $\phi(v)$ for  $v$ $\in V(\Vec{D})$, then the corresponding cop in $\Vec{D}$ is put on $v$. Then, we put the robber in $\Vec{D}^+$ on an arbitrary vertex of the clique $\phi(u)$ if the robber is in $u$ in $\Vec{D}$.

In the first round, we let all the cops in $\Vec{D}$ stay idle and the corresponding cops in $\Vec{D}^+$ also stay idle. Then it's the robber's turn to move in $\Vec{D}$. The robber has two choices: stay idle or move. If the robber stays idle, then we do not move the robber in $\Vec{D}^+$. Assume the robber moves in $\Vec{D}$, for example, from $u$ to $v$. Let $u'v'$ be the (unique) arc in $\Vec{D}^+$, where $u'\in \phi(u)$ and $v'\in \phi(v)$. We will consider two cases.

{\bf Case 1} In $\Vec{D}^+$, the robber is on $u'$.

In this case, we move the robber to $v'$ (the arc works by the definition of clique substitution) and the cops in $\Vec{D}^+$ have one round to move by winning strategy. Assume one of cops, say $c_1$, moves from $x'$ to $y'$, where $x'\in \phi(x)$ and $y'\in \phi(y)$. If $x=y$, then $c_1$ does nothing in $\Vec{D}$; otherwise $c_1$ moves from $x$ to $y$.

{\bf Case 2} In $\Vec{D}^+$, the robber is on another vertex of $\phi(u)$, say $u''$.

In this case, we move the robber first from $u''$ to $u'$ by one step (it works by the definition of clique substitution). Then the cops in $\Vec{D}^+$  have one round to move by winning strategy. Assume one of cops, say $c_1$, moves from $x'$ to $y'$, where $x'\in \phi(x)$ and $y'\in \phi(y)$. In the next round in $\Vec{D}^+$, we can move the robber from $u'$ to $v'$ and the cops can also move for another round by winning strategy. In this round, we assume $c_1$ moves from $y'$ to $z'$, where $z'\in \phi(z)$. By the definition of clique substitution, we have $x=y=z$ or $x=y\not=z$ or $x\not=y=z$ in $\Vec{D}$. If $x=y=z$, then $c_1$ does nothing in $\Vec{D}$; otherwise $c_1$ moves from $x$ to $z$.


Now we get a strategy for the cops in $\Vec{D}$. By our assumption, the robber will be caught in $\Vec{D}^+$. That means at least one cop and the robber are on the clique $\phi(v)$ for some vertex $v\in V(\Vec{D})$ and the corresponding cop and the robber in $\Vec{D}$ will be on the same vertex by our translation.
\end{proof}

\begin{lemma}
For any directed graph, arc substitution does not decrease the cop number.
\end{lemma}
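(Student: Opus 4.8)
The plan is to mirror the proof of Lemma~1. First I would reduce to the case of a single subdivision: replacing an arc $a=(u,v)$ by a directed path of length $m$ is the composition of $m-1$ operations each of which replaces one arc by a directed path of length two, so by induction it suffices to treat the case in which one arc $a=(u,v)$ of $\Vec{D}$ is replaced by $u\to p\to v$ for a new vertex $p$; call the resulting digraph $\Vec{D}'$. Assuming the cop player has a winning strategy with $k$ cops on $\Vec{D}'$, I would run the two games simultaneously, letting the robber play on $\Vec{D}$ while simulating his moves on $\Vec{D}'$, and translating the cops' winning moves from $\Vec{D}'$ back to $\Vec{D}$, in the same fashion as in Lemma~1.

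The translation is governed by the projection $\pi\colon V(\Vec{D}')\to V(\Vec{D})$ that fixes every original vertex and sends $p$ to $u$; regarding stay moves as loops, $\pi$ is a homomorphism, because the only two new arcs $(u,p)$ and $(p,v)$ are sent respectively to the loop at $u$ and to the arc $a$. Throughout, each cop on $\Vec{D}$ stands on the $\pi$-image of the matching cop on $\Vec{D}'$, which is always a legal move since $\pi$ is a homomorphism. Any move of the robber on $\Vec{D}$ that does not use the arc $a$ is copied verbatim on $\Vec{D}'$, because that arc (or a stay) is still present there. The one case that requires work is when the robber crosses $a$, i.e.\ moves from $u$ to $v$: on $\Vec{D}'$ he must now walk $u\to p\to v$, which costs an extra round, during which the cops on $\Vec{D}'$ take an extra step, and I have to show that this step gains them nothing on $\Vec{D}$.

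I expect the bookkeeping of this extra round to be the main obstacle. It is the analogue of Case~2 in the proof of Lemma~1, but harder, since a two-step walk of a cop on $\Vec{D}'$ that stays off the path $u\to p\to v$ is a genuine two-step walk on $\Vec{D}$, with no clique available to absorb it. The structural fact I would build on is that every interior vertex of the substituted path has in-degree and out-degree exactly one: a cop that ever enters $p$ is forced straight on to $v$ on her next move and makes no independent progress on the $\Vec{D}$-board while she is there, which is what should allow one to match the robber's single $\Vec{D}$-move across $a$ with his two moves $u\to p\to v$ on $\Vec{D}'$ without letting the cops gain ground on $\Vec{D}$; arranging the interleaving of the two games so that this holds simultaneously for every cop is the delicate point. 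Granting that, a capture on $\Vec{D}'$ --- a cop and the robber on a common vertex --- projects under $\pi$ to a capture on $\Vec{D}$, so $k$ cops also win on $\Vec{D}$, whence $c(\Vec{D})\le c(\Vec{D}')$. Finally, arc substitution preserves strong connectivity, as already noted in the preliminaries, so nothing is lost on that account.
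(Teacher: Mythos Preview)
Your reduction to a single subdivision is formally valid, but it does not make the problem easier; the single-arc case is exactly where your argument stalls, and you say as much. When the robber crosses $a$ on $\Vec{D}$ you spend two rounds on $\Vec{D}'$, and every cop on $\Vec{D}'$ gets two moves. A cop whose two-step walk on $\Vec{D}'$ avoids $p$ entirely traverses two arcs of $\Vec{D}$, so her $\pi$-image must advance two steps in a single $\Vec{D}$-round, which is illegal. The in/out-degree-one structure of $p$ is irrelevant to such a cop, and no interleaving of the two games can absorb that extra step: the asymmetry is intrinsic to subdividing only one arc.

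The paper sidesteps this by subdividing \emph{all} arcs of $\Vec{D}$ simultaneously, each to a path of the same length, to form $\Vec{D}^{++}$. Then every arc of $\Vec{D}^{++}$ lies on one of these substituted paths, so any two consecutive moves on $\Vec{D}^{++}$ cross at most one original arc and hence project under the paper's map $\psi$ to at most one step on $\Vec{D}$. Two rounds on $\Vec{D}^{++}$ now correspond cleanly to one round on $\Vec{D}$ for cops and robber alike, and the copy strategy of Lemma~1 goes through with no leftover moves. The uniform slowdown is the whole point of the construction; your one-arc-at-a-time reduction discards it.
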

\begin{proof}
Let $\Vec{D}^{++}$ be a digraph obtained from $\Vec{D}$ by subdividing all arcs of $\Vec{D}$ at the same time. 
We can construct two maps $\iota$: $V(\Vec{D})$$\rightarrow$ $V(\Vec{D}^{++})$ and $\psi$: $V(\Vec{D}^{++})\rightarrow V(\Vec{D})$ such that
$\iota$: $V(\Vec{D})$$\rightarrow$ $V(\Vec{D}^{++})$ is the natural map so that the vertices in $V(\Vec{D}^{++})\backslash\iota[V(\Vec{D})]$ are these new vertices added to $\Vec{D}$ to obtain $\Vec{D}^{++}$. Suppose that $uv$ is an arc in $\Vec{D}$, then there is a directed path $P_{u,v}$ in $\Vec{D}^{++}$ joining $\iota(u)$ with $\iota(v)$. The map $\psi$ sends the vertices of $P_{u,v}$ except $\iota(u)$ to the vertex $v\in V(\Vec{D})$, and vertex $\iota(u)$ to the vertex $u\in V(\Vec{D})$.

Now we know that the corresponding relationship between $V(\Vec{D})$ and $V(\Vec{D}^{++})$. We use the "copy" strategy to copy the winning strategy in $\Vec{D}^{++}$ and will catch the robber in digraph $\Vec{D}$ just like Lemma 1.
\end{proof}

Now we know some powerful lemmas and we can explore the relationship between the class of $\Vec{H}$-free strongly connected directed graphs and the bounded cop number.

\begin{lemma}
The class of $\Vec{H}$-free strongly connected directed graphs has unbounded cop number, where $\Vec{H}$ is one of the four structures in Figure 3.
\end{lemma}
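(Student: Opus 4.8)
The plan is to treat all four digraphs $\Vec{H}$ of Figure~3 by one template: start from a cop-unbounded family of strongly connected digraphs, transform it by the substitution operations of Section~2 — which by Lemma~1 and Lemma~2 never decrease the cop number, and which, by the observation made after the definition of those operations, keep a digraph strongly connected — and then, for the appropriate choice of substitutions, verify that the resulting digraph has no induced copy of $\Vec{H}$. In this way the cop-number lower bound and strong connectedness are automatic, and the whole content of the lemma is pushed into the forbidden-subgraph verification.

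For the base family I would use the digraphs $\Vec{B}_q$ obtained from the incidence graph $G_q$ of a projective plane of order $q$ by replacing each edge with two oppositely oriented arcs; this is precisely the family appearing in the proof of Theorem~1, so $c(\Vec{B}_q)\ge c(G_q)$ is unbounded, and since $G_q$ is connected, $\Vec{B}_q$ is strongly connected. For each $\Vec{H}$ of Figure~3 I would then apply to $\Vec{B}_q$ a clique substitution at every vertex and/or an arc substitution of a fixed length at every arc, obtaining a family $\{\Vec{B}_q^{\,\Vec{H}}\}_q$ which, by Lemmas~1 and 2 and the strong-connectedness remark, is again cop-unbounded and strongly connected. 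Two regimes are immediate. On one hand, an arc substitution turns every $2$-cycle of $\Vec{B}_q$ into a directed cycle of length at least $4$, so the subdivided digraph is an oriented graph and is therefore $\Vec{H}$-free whenever $\Vec{H}$ contains a $2$-cycle. On the other hand, since every arc of $\Vec{B}_q$ lies in a $2$-cycle, $\Vec{B}_q$ itself has no induced copy of any digraph in which some two adjacent vertices are joined by only one arc (for instance a directed path $\Vec{P_3}$), and as $G_q$ is bipartite of girth $6$ it also avoids induced triangles, $4$-cycles and $5$-cycles with all edges doubled; so for an $\Vec{H}$ of such a shape the untouched $\Vec{B}_q$ already works. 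The cases of Figure~3 that lie strictly between these two regimes I would handle by composing a clique substitution (which blows up each vertex into a clique of $2$-cycles joined to the rest of the digraph only by the prescribed one-way arcs, destroying short ``mixed'' patterns) with an arc substitution of a suitable fixed length (which in addition forces long induced one-way paths and eliminates the $2$-cycles across subdivided arcs).

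The step I expect to be the main obstacle is the induced-subgraph bookkeeping that this requires: for each of the four $\Vec{H}$ one must rule out \emph{every} placement of $V(\Vec{H})$ inside $\Vec{B}_q^{\,\Vec{H}}$, including copies straddling several substitution gadgets. This forces one to argue not only from the arcs the substitutions create but, crucially, from the arcs they do \emph{not} create — the absence of any arc between the images of two non-adjacent vertices of $\Vec{B}_q$, and the rigid pattern of one-way versus two-way arcs inside and between the clique gadgets and along the subdivided arcs. Because each $\Vec{H}$ in Figure~3 has only boundedly many vertices, once these local rules are made explicit each of the four cases reduces to a finite check; the remaining claims of the lemma are then inherited directly from Lemmas~1 and 2 and from the known cop number of the incidence graphs of projective planes.
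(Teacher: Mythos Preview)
Your template is sound, but you are working harder than necessary and your case split shows you have not looked closely at Figure~3. The four digraphs there are the four orientations of the claw $K_{1,3}$; in particular each of them is an \emph{oriented} graph with no $2$-cycle. Hence every one of them already falls under your second regime: in your base family $\Vec{B}_q$ every arc lies in a $2$-cycle, so no induced subgraph of $\Vec{B}_q$ can be an oriented graph with at least one arc, and $\Vec{B}_q$ is therefore $\Vec{H}$-free for all four $\Vec{H}$ simultaneously. You are done without invoking Lemma~1 or Lemma~2 at all. There are no cases ``strictly between'' your two regimes --- a digraph either contains an arc not in a $2$-cycle (your regime~2 applies) or it does not --- so the composed clique/arc substitutions and the promised ``finite check'' of straddling placements are superfluous here.

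The paper's own proof takes a different one-line route: it applies clique substitution (Lemma~1) to an \emph{arbitrary} cop-unbounded strongly connected family $\mathcal{D}$ and simply asserts that every $\Vec{D}^+$ is $\Vec{H}$-free. The implicit reason is that in $\Vec{D}^+$ each vertex has exactly one neighbour outside its own clique gadget $\phi(v)$, while all vertices inside $\phi(v)$ are pairwise adjacent in the underlying graph; thus no vertex of $\Vec{D}^+$ can have three pairwise nonadjacent neighbours, and the underlying graph of $\Vec{D}^+$ is claw-free. Both arguments are short; yours (once trimmed) avoids Lemma~1 entirely by exploiting the specific structure of $\Vec{B}_q$, while the paper's avoids committing to any particular base family.
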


\begin{figure}[hbt]
\begin{center}

\tikzstyle{vertex}=[circle,fill=black,inner sep=1.5pt]
\tikzstyle{new}=[circle,fill=red,inner sep=1.5pt]
\tikzstyle{blue}=[circle,fill=blue, inner sep=1.5pt]
\tikzstyle{green}=[circle,fill=green, inner sep=1.5pt]

\tikzstyle{curly edge}=[]
\tikzstyle{straight edge}=[]

\begin{tikzpicture}
	\begin{pgfonlayer}{nodelayer}
	    \node [style=vertex] (0) at (0, 0) {};
		\node [style=vertex] (1) at (0, 1) {};
		\node [style=vertex] (2) at (-0.5, -0.75) {};
		\node [style=vertex] (3) at (0.5, -0.75) {};
	\end{pgfonlayer}
	\begin{pgfonlayer}{edgelayer}
		\draw [directed] (0) to (1);
		\draw [directed] (0) to (2);
		\draw [directed] (0) to (3);
	\end{pgfonlayer}
\end{tikzpicture}
\hspace{1cm}
\begin{tikzpicture}
	\begin{pgfonlayer}{nodelayer}
	    \node [style=vertex] (0) at (0, 0) {};
		\node [style=vertex] (1) at (0, 1) {};
		\node [style=vertex] (2) at (-0.5, -0.75) {};
		\node [style=vertex] (3) at (0.5, -0.75) {};
	\end{pgfonlayer}
	\begin{pgfonlayer}{edgelayer}
		\draw [directed] (1) to (0);
		\draw [directed] (0) to (2);
		\draw [directed] (0) to (3);
	\end{pgfonlayer}
\end{tikzpicture}
\hspace{1cm}
\begin{tikzpicture}
	\begin{pgfonlayer}{nodelayer}
	    \node [style=vertex] (0) at (0, 0) {};
		\node [style=vertex] (1) at (0, 1) {};
		\node [style=vertex] (2) at (-0.5, -0.75) {};
		\node [style=vertex] (3) at (0.5, -0.75) {};
	\end{pgfonlayer}
	\begin{pgfonlayer}{edgelayer}
		\draw [directed] (1) to (0);
		\draw [directed] (2) to (0);
		\draw [directed] (3) to (0);
	\end{pgfonlayer}
\end{tikzpicture}
\hspace{1cm}
\begin{tikzpicture}
	\begin{pgfonlayer}{nodelayer}
	    \node [style=vertex] (0) at (0, 0) {};
		\node [style=vertex] (1) at (0, 1) {};
		\node [style=vertex] (2) at (-0.5, -0.75) {};
		\node [style=vertex] (3) at (0.5, -0.75) {};
	\end{pgfonlayer}
	\begin{pgfonlayer}{edgelayer}
		\draw [directed] (0) to (1);
		\draw [directed] (2) to (0);
		\draw [directed] (3) to (0);
	\end{pgfonlayer}
\end{tikzpicture}

\end{center}

\label{fig:graphs}
\caption{Four structures of $\Vec{H}$ mentioned in Lemma 3.}
\end{figure}
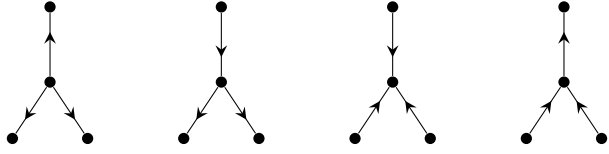

\begin{proof}
Let $\mathcal{D}$ be any class of strongly connected digraphs with unbounded cop number and $\mathcal{D}^+$ := $\{\Vec{D}^+| \Vec{D}\in\mathcal{D}\}$. Notice that all digraphs in $\mathcal{D}^+$ are also strongly connected and $\Vec{H}$-free. Applying Lemma 1, we draw a conclusion that the cop number of digraphs in $\mathcal{D}^+$ is unbounded.
\end{proof}

\begin{lemma}
The class of strongly connected digraphs with undirected girth at least $l$ ($l\geq 2$) has unbounded cop number.
\end{lemma}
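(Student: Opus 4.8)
\noindent\emph{Proof proposal.} The plan is to take a class of strongly connected digraphs that is already cop\nobreakdash-unbounded and inflate its undirected girth by subdividing arcs, keeping the cop number large via Lemma~2.

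Fix an integer $l\geq 2$. By the proof of the first theorem of this section there is a class $\mathcal{C}$ of strongly connected digraphs whose cop number is unbounded: for each finite projective plane, take its incidence graph (whose cop number is of order $\sqrt{n}$) and replace every edge by a digon; the resulting digraph is strongly connected, and since on a digraph in which every arc lies in a digon the cops-and-robbers game coincides with the undirected game on the underlying graph, its cop number is unchanged. For each $\Vec{D}\in\mathcal{C}$ let $\Vec{D}^{(l)}$ be the digraph obtained by performing an arc substitution at every arc of $\Vec{D}$, replacing each arc by a directed path of length $l$ (equivalently, by applying $l-1$ successive single subdivisions of all arcs). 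Put $\mathcal{C}^{(l)}:=\{\Vec{D}^{(l)}:\Vec{D}\in\mathcal{C}\}$; I claim $\mathcal{C}^{(l)}$ witnesses the lemma.

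Three points need to be checked. First, strong connectivity: a strongly connected digraph remains strongly connected after an arc substitution (remarked in Section~2), so every member of $\mathcal{C}^{(l)}$ is strongly connected. Second, unbounded cop number: applying Lemma~2 once per single subdivision yields $c(\Vec{D}^{(l)})\geq c(\Vec{D})$ for every $\Vec{D}\in\mathcal{C}$, so $\mathcal{C}^{(l)}$ is cop-unbounded. Third, undirected girth: in the underlying undirected graph of $\Vec{D}^{(l)}$ every vertex not coming from a vertex of $\Vec{D}$ has degree $2$ and lies in the interior of exactly one subdivided arc; hence any cycle meets at least two vertices coming from $V(\Vec{D})$ and, between two consecutive such vertices, runs along an entire subdivided arc, a path of length $l$. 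Therefore every cycle has length at least $2l\geq l$, so the undirected girth of $\Vec{D}^{(l)}$ is at least $l$ (in fact exactly $2l$, attained by the two subdivided arcs coming from any digon).

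Combining the three points, $\mathcal{C}^{(l)}$ is a cop-unbounded class of strongly connected digraphs of undirected girth at least $l$, which proves the lemma. The only slightly delicate step is the girth count — verifying that subdividing both arcs of a digon, and all arcs around any short undirected cycle, leaves no short cycle behind — and this is exactly what the degree-$2$ observation above gives; the rest is a direct appeal to Lemma~2 and to the remark that arc substitution preserves strong connectivity.
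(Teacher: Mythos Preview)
Your argument is correct and follows essentially the same route as the paper: start from a cop-unbounded family of strongly connected digraphs, subdivide all arcs to push the undirected girth above $l$, and invoke Lemma~2 together with the remark that arc substitution preserves strong connectivity. The only differences are cosmetic: the paper begins from an \emph{arbitrary} cop-unbounded strongly connected class and simply says ``subdivide sufficiently many times'', whereas you fix the projective-plane digon construction explicitly and give a careful degree-$2$ argument for the girth bound; neither change affects the logic.
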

\begin{proof}
Let $\mathcal{D}$ be any class of strongly connected digraphs with unbounded cop number and $\mathcal{D}^{++}$ := $\{\Vec{D}^{++}| D\in\mathcal{D}\}$, $\Vec{D}^{++}$ is obtained by subdividing arcs of $\Vec{D}$ sufficient times to make the undirected girth of $\Vec{D}^{++}$ at least $l$. The digraphs in $\mathcal{D}^{++}$ are also strongly connected. Applying Lemma 2, we find that the cop number of digraphs in $\mathcal{D}^{++}$ is unbounded.
\end{proof}

Now we are ready to complete the proof of Theorem 2.

\begin{theoremtwo}
For any  directed graph $\Vec{H}$, the class of $\Vec{H}$-free
strongly connected digraphs has unbounded cop number.
\end{theoremtwo}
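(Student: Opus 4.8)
The plan is to argue by contradiction, mirroring the argument used for the first theorem of this section, except that the cop‑unbounded families invoked there are replaced by strongly connected ones supplied by Lemmas 3 and 4. So I would assume that $\Vec{H}$ is a digraph for which the class of $\Vec{H}$-free strongly connected digraphs has bounded cop number, take without loss of generality $\Vec{H}$ to have at least one arc, and split into three cases depending on the shape of $\Vec{H}$.

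The first case is when $\Vec{H}$ is not an oriented forest, that is, $\Vec{H}$ contains a digon or the underlying simple graph of $\Vec{H}$ contains a cycle. Here I would set $l=|V(\Vec{H})|+1$ (so $l\ge 3$), and observe that any digraph of undirected girth at least $l$ has no sub-digraph on at most $l-1=|V(\Vec{H})|$ vertices containing a digon or a cycle, hence contains no copy of $\Vec{H}$ at all and is in particular $\Vec{H}$-free; by Lemma 4 the class of strongly connected digraphs of undirected girth at least $l$ has unbounded cop number, which is the desired contradiction.

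Next I would treat the case in which $\Vec{H}$ is an oriented forest possessing a vertex $v$ of degree at least $3$: choosing three neighbours $a,b,c$ of $v$, which are pairwise non-adjacent because $\Vec{H}$ is a forest, the subdigraph induced on $\{v,a,b,c\}$ is a digon-free orientation of the claw and so, after relabelling the leaves, is one of the four digraphs $\Vec{H_0}$ in Figure 3. Since the induced-subdigraph relation is transitive, every $\Vec{H_0}$-free digraph is $\Vec{H}$-free, so the class of $\Vec{H_0}$-free strongly connected digraphs --- which is cop-unbounded by Lemma 3 --- is contained in the class of $\Vec{H}$-free strongly connected digraphs, forcing the latter to be cop-unbounded as well, a contradiction. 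In the remaining case $\Vec{H}$ is an oriented forest of maximum degree at most $2$, i.e.\ a disjoint union of oriented paths; the two endpoints of any arc of $\Vec{H}$ induce a copy of $\Vec{P_2}$ (there is no digon since $\Vec{H}$ is oriented), so every $\Vec{P_2}$-free digraph is $\Vec{H}$-free. Replacing every edge of the incidence graph of a finite projective plane \cite{MR2985811} by a pair of oppositely oriented arcs gives a strongly connected $\Vec{P_2}$-free digraph whose cop number equals that of the incidence graph, of order $\sqrt n$, so again we obtain a contradiction and the case analysis is complete.

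I expect that, given Lemmas 1--4, there is no real obstacle here beyond bookkeeping; the one point that needs care is the direction of the reductions in the last two cases, where one must use that a subclass with unbounded cop number already forces the whole class to be unbounded, together with the fact that a digraph containing $\Vec{H}$ as an induced subgraph automatically contains the smaller witness ($\Vec{H_0}$, respectively $\Vec{P_2}$) as an induced subgraph. The genuinely substantive ingredient --- the coupling argument underlying Lemma 1 and hence Lemma 3, applied to a pre-existing cop-unbounded strongly connected family via clique and arc substitutions --- has already been isolated, so nothing deep remains.
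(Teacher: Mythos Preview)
Your proof is correct and follows essentially the same three-case reduction as the paper: use Lemma~4 to force the underlying graph of $\Vec{H}$ to be a forest, then Lemma~3 to rule out vertices of degree~$\ge 3$, and finally the digon-doubled projective-plane incidence graphs to handle the remaining (oriented path) case via an induced $\Vec{P_2}$. You are in fact a bit more explicit than the paper about digons in Case~1 and about the possibility that $\Vec{H}$ is disconnected in Case~3, but the overall architecture is identical.
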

\begin{proof}
Let $\Vec{H}$ be a directed graph such that the class of $\Vec{H}$-free strongly connected digraphs has bounded cop number. By Lemma 4, we have that the underlying graph of $\Vec{H}$ must be a forest.

Now suppose that the underlying graph of $\Vec{H}$ is not a path, which means that $\Vec{H}$ must contains at least one of four structures mentioned in Lemma 3 as an induced subgraph. However, we know that's also impossible by Lemma 3. So the underlying graph of $\Vec{H}$ must be a path, which must contain $\Vec{P_2}$ and makes a contradiction like the proof of Theorem 1.
\end{proof}

\section{$\Vec{P_k}^*$-free}
We now turn our attention to the class of  $\Vec{P_k}^*$-free graphs. We can get the following theorem.
\begin{theoremtwo}
Let $\Vec{D}$ be a $\Vec{P_k}^*$-free strongly connected digraph for $k\geq 3$, then $c(\Vec{D})\leq k-2$.
\end{theoremtwo}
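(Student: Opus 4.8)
The approach is to induct on $k$, after a structural reading of the hypothesis; in the inductive step one cop is spent to lower the parameter. First I would translate the definition: a set of $k$ vertices forms a forbidden configuration exactly when, for some ordering $u_1,\dots,u_k$ of it, the arcs pointing ``forward'' (from a lower to a higher index) are precisely $u_1\to u_2,\dots,u_{k-1}\to u_k$ — backward arcs being unconstrained — so that $\Vec{D}$ is $\Vec{P_k}^*$-free if and only if $\Vec{D}$ has no directed path on $k$ vertices that is \emph{forward-chordless}, i.e.\ with no arc $u_i\to u_j$ for $j\ge i+2$. Since a shortest directed path is automatically forward-chordless, this yields immediately that a $\Vec{P_k}^*$-free digraph has directed diameter at most $k-2$ (a shortest directed path on $k$ vertices would be forbidden). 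The two consequences I would actually use are: (i) $\Vec{D}$ contains no forward-chordless directed path on $k$ vertices, and (ii) this property is inherited by every induced subdigraph.

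The base case $k=3$ is clean: directed diameter $\le 1$ together with strong connectivity forces $\Vec{D}$ to be the complete digraph on its vertex set, on which a single cop wins by moving onto the robber, so $c(\Vec{D})\le 1=k-2$. For the inductive step, suppose the bound holds for $k-1$ and let $\Vec{D}$ be strongly connected and $\Vec{P_k}^*$-free with $k\ge 4$. I would fix a vertex $v$ (the choice is the sensitive point; natural candidates are an endpoint of a longest forward-chordless directed path, or a suitably central vertex) and station cop $c_1$ at $v$. Writing $N^+[v]$ for $v$ together with every vertex $w$ such that $\Vec{D}$ has the arc $v\to w$, a cop parked at $v$ prevents the robber from ever ending a move in $N^+[v]$ — otherwise $c_1$ captures it on the next move along that arc — so from then on the robber lives in $\Vec{D}-N^+[v]$, and in fact in the single strong component $\Vec{C}$ of $\Vec{D}-N^+[v]$ containing its current vertex, since any arc leaving $\Vec{C}$ either enters $N^+[v]$ (forbidden) or goes downstream in the condensation of $\Vec{D}-N^+[v]$, which is one-way, so the remaining cops can simply restart their chase whenever the robber jumps to a new component.

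The engine of the induction would then be a claim of the shape: $\Vec{C}$ is $\Vec{P_{k-1}}^*$-free. To establish it I would take a forward-chordless directed path on $k-1$ vertices inside $\Vec{C}$ and splice onto it, at whichever end the orientations permit, a single vertex of $N^+[v]$ reached via strong connectivity, arranging — through the choice of $v$ — that the new vertex has exactly the right forward (non-)adjacencies to the path, so as to obtain a forward-chordless directed path on $k$ vertices of $\Vec{D}$, contradicting (i). Granting this claim, the induction hypothesis supplies a winning strategy for $(k-1)-2=k-3$ cops on $\Vec{C}$ (and on each downstream component), which together with $c_1$ is $k-2$ cops, as required.

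The main obstacle is precisely this reduction. First, the parked-cop idea may be too weak — deleting $N^+[v]$ need not by itself shorten the longest forward-chordless path — so $c_1$ may instead have to \emph{herd} the robber, following it along shortest directed paths so as to progressively delete neighbourhoods around the robber's escape routes; pinning down what small structure $c_1$ should patrol is part of the difficulty. Second, and more essentially, the one-vertex extension of a $(k-1)$-path to a $k$-path is delicate because $\Vec{P_k}^*$-freeness controls only forward chords: the spliced vertex may carry arbitrary backward arcs, so it must be attached at the correct end with exactly the correct forward (non-)arcs, and it is here that strong connectivity and the ``upper-triangular'' form of the forbidden pattern must be used in full. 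A routine but necessary final point is checking that the robber cannot oscillate among the strong components of $\Vec{D}-N^+[v]$ in a way that defeats the $k-3$ cops indefinitely.
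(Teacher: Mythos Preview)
Your proposal has a genuine gap, and you have already put your finger on it: the claim that each strong component $\Vec{C}$ of $\Vec{D}-N^+[v]$ is $\Vec{P_{k-1}}^*$-free is the entire content of the inductive step, and it is not established. The splicing you describe---attaching one vertex of $N^+[v]$ (or $v$ itself) to a forward-chordless $(k-1)$-path $u_1\to\cdots\to u_{k-1}$ inside $\Vec{C}$---does not go through: none of the $u_i$ lie in $N^+[v]$, so in particular there is no arc $v\to u_1$ and $v$ cannot be prepended; and an arbitrary out-neighbour of $u_{k-1}$ in $N^+[v]$ appended at the end may receive forward arcs from earlier $u_i$, destroying forward-chordlessness. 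There is no evident choice of $v$ that fixes this for every $(k-1)$-path in every component simultaneously, and your fallback of having $c_1$ ``herd'' rather than park is a hope, not a strategy. As written the induction does not close.

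The paper takes a completely different, non-inductive route that avoids any structural lemma about induced subdigraphs. It argues by contradiction: if $k-2$ cops cannot win, let $t\ge 1$ be the minimum cop--robber distance ever achieved. First gather all $k-2$ cops onto a single vertex at distance $t$ from the robber, along a shortest directed path $u\to u_1\to\cdots\to u_{t-1}\to v$. Now advance the pack one step along this path per round, each time leaving one cop behind, while the robber is forced to flee to genuinely new vertices $v_1,v_2,\dots$; the minimality of $t$ forbids any forward arc from a vertex already on the walk to the robber's next vertex. After $k-2$ robber moves the concatenation $u\to u_1\to\cdots\to v\to v_1\to\cdots\to v_{k-2}$ is a forward-chordless directed path on $t+k-1\ge k$ vertices, contradicting $\Vec{P_k}^*$-freeness. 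The cops are spent one per step precisely to pin down the robber's escape route and certify the absence of forward chords; this is where the count $k-2$ comes from.
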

\begin{proof}
Note that $\Vec{D}$ does not contain $\Vec{P_k}$ as an induced subgraph. Moreover, $\Vec{D}$ also does not contain any induced subgraphs which has the same upper triangular part of adjacency matrix as $\Vec{P_k}$.
\begin{figure}[hbt]
\begin{center}

\tikzstyle{vertex}=[circle,fill=black,inner sep=1.5pt]
\tikzstyle{new}=[circle,fill=red,inner sep=1.5pt]
\tikzstyle{blue}=[circle,fill=blue, inner sep=1.5pt]
\tikzstyle{green}=[circle,fill=green, inner sep=1.5pt]

\tikzstyle{curly edge}=[]
\tikzstyle{straight edge}=[]

\begin{tikzpicture}
	\begin{pgfonlayer}{nodelayer}
	    \node [style=vertex] (0) at (-2, 0) {};
		\node [style=vertex] (1) at (-1, 0) {};
		\node [style=vertex] (2) at (0, 0) {};
		\node [style=vertex] (3) at (1, 0) {};
		\node [style=vertex] (4) at (2, 0) {};
	\end{pgfonlayer}
	\begin{pgfonlayer}{edgelayer}
		\draw [directed] (0) to (1);
		\draw [directed] (1) to (2);
		\draw [dashed] (2) to (3);
		\draw [directed] (3) to (4);
		\draw [directed] (2,0) arc(0:180:1cm and 1cm);
	\end{pgfonlayer}
\end{tikzpicture}
\hspace{1cm}
\begin{tikzpicture}
    \begin{pgfonlayer}{nodelayer}
	    \node [style=vertex] (0) at (-2, 0) {};
		\node [style=vertex] (1) at (-1, 0) {};
		\node [style=vertex] (2) at (0, 0) {};
		\node [style=vertex] (3) at (1, 0) {};
		\node [style=vertex] (4) at (2, 0) {};
	\end{pgfonlayer}
	\begin{pgfonlayer}{edgelayer}
		\draw [directed] (0) to (1);
		\draw [directed] (1) to (2);
		\draw [dashed] (2) to (3);
		\draw [directed] (3) to (4);
		\draw [directed] (2,0) arc(0:180:1cm and 1cm);
		\draw [directed] (2,0) arc(0:180:2cm and 1cm);
	\end{pgfonlayer}
\end{tikzpicture}
\hspace{1cm}
\begin{tikzpicture}
	\begin{pgfonlayer}{nodelayer}
	    \node [style=vertex] (0) at (-2, 0) {};
		\node [style=vertex] (1) at (-1, 0) {};
		\node [style=vertex] (2) at (0, 0) {};
		\node [style=vertex] (3) at (1, 0) {};
		\node [style=vertex] (4) at (2, 0) {};
	\end{pgfonlayer}
	\begin{pgfonlayer}{edgelayer}
		\draw [directed] (0) to (1);
		\draw [directed] (1) to (2);
		\draw [dashed] (2) to (3);
		\draw [directed] (3) to (4);
		\draw [directed] (1,0) arc(0:180:0.5cm and 1cm);
		\draw [directed] (1,0) arc(0:180:1cm and 1cm);
	\end{pgfonlayer}
\end{tikzpicture}

\end{center}

\label{fig:graphs}
\caption{Some examples who have the same upper triangular part of adjacency matrix as $\Vec{P_k}$ .}
\end{figure}
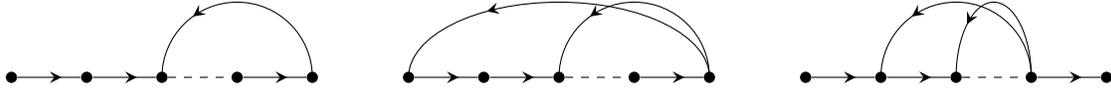

Suppose that we can never use $k-2$ cops to catch the robber, that means there exists a minimal distance $t$ ($t\geq 1$) between cops and robber in unlimited rounds of the game. Even if the cops cannot catch the robber (at least one cop and the robber are on the same vertex), the cops also have a strategy such that at least one cop has a distance of $t$ from the robber after finite rounds based on our assumption. Suppose a cop $c_1$ is on the vertex $u$ and the robber is on the vertex $v$, there is a shortest directed path $\Vec{P}_{t+1}$: $u-u_1-\ldots-u_{t-1}-v$. Next, the robber will move to a new vertex $v_1$ to escape capture and $c_1$ will move to $u_1$ to decrease the distance between the cops and the robber. The robber uses his strategy to escape and has a walk: $v_1-v_2-\ldots$ (perhaps $v_{i_1}$ and $v_{i_2}$ are the same; $v_{i_3}$ is $u_i$), $c_1$ also has a walk: $u_1-\ldots-u_{t+1}-v_1-v_2-\ldots$. Let the cop $c_1$ move along the path $\Vec{P}_{t+1}$ and then just repeat the robber's former moves. Since the digraph $\Vec{D}$ is finite so we can let the other $k-3$ cops will be on the same vertex with $c_1$ after finite rounds. Next, we just consider the case all $k-2$ cops are on one vertex and have a minimal distance $t$ ($t\geq 1$) from the robber.

Without loss of generality, suppose $k-2$ cops are on the vertex $u$ and the robber is on the vertex $v$, there is a shortest directed path $\Vec{P}_{t+1}$: $u-u_1-\ldots-u_{t-1}-v$ between $u$ and $v$. The robber moves to a new vertex $v_1$ to escape capture. There is no arc from $u$, $u_1$, $\ldots$, $u_{t-1}$ to $v_1$, suppose not, the distance between cops and robber will be less than $t$. We leave one cop $c_1$ on $u$ and let other $k-3$ cops move to $u_1$. The robber cannot choose any existing vertex to move because this move will decrease the minimal distance between cops and robber. The robber must choose another new vertex $v_2$ to escape. We leave another cop $c_2$ on $u_1$ and let other $k-4$ cops move to $u_2$. The robber use his escaping strategy and has his new path: $v_1-\ldots-v_{k-2}$, $v_i$ and $v_j$ are different and $v_i$ and $u_i$ are also different based on our analysis. The cops will leave one cop still and others go on catching again and again. Finally, we can get a directed path of length $k+t-2$: $u-u_1-\ldots-u_{t+1}-v-v_1-\ldots-v_{k-2}$. Since $t\geq 1$, we get a directed path $P_k$. This path cannot have any arc from the previous vertex to the later vertex by our construction, so $\Vec{D}$ contains an induced subgraph who has the same upper triangular part of adjacency matrix as $\Vec{P}_k$. However, by our assumption, $\Vec{D}$ is $\Vec{P_k}^*$-free. We get a contradiction.

\end{proof}

We knew that we have the similar conclusion ($k-2$ cops can catch the robber for any $P_k$-free undirected graph) in \cite{MR2791289}. So we provide a new perspective about how to forbid the subgraphs and get a similar conclusion in directed graphs.

\end{document}